\providecommand{\U}[1]{\protect\rule{.1in}{.1in}}
\newtheorem{theorem}{Theorem}
\newtheorem{corollary}[theorem]{Corollary}
\newtheorem{example}[theorem]{Example}
\newtheorem{lemma}[theorem]{Lemma}
\newenvironment{proof}[1][Proof]{\noindent\textbf{#1.} }{\ \rule{0.5em}{0.5em}}
\begin{document}

\title{Efficiency of the singular vector of a reciprocal matrix and comparison to the
Perron vector}
\author{Susana Furtado\thanks{Email: sbf@fep.up.pt The work of this author was
supported by FCT- Funda\c{c}\~{a}o para a Ci\^{e}ncia e Tecnologia, under
project UIDB/04721/2020.} \thanks{Corresponding author.}\\CEAFEL and Faculdade de Economia \\Universidade do Porto\\Rua Dr. Roberto Frias\\4200-464 Porto, Portugal
\and Charles R. Johnson \thanks{Email: crjohn@wm.edu. }\\Department of Mathematics\\College of William and Mary\\Williamsburg, VA 23187-8795}
\maketitle

\begin{abstract}
In models using pair-wise (ratio) comparisons among alternatives (reciprocal
matrices $A$) to deduce a cardinal ranking vector, the right Perron
eigenvector was traditionally used, though several other options have emerged.
We propose and motivate another alternative, the left singular vector (Perron
eigenvector of $AA^{T}$). Theory is developed. We show that for reciprocal
matrices obtained from consistent matrices by modifying one column, and the
corresponding row, the cone generated by the columns is efficient, implying
that the Perron vector and the left singular vector of such matrices are
always efficient. In addition, the two vectors are compared empirically for
random matrices. With regard to efficiency, the left singular vector
consistently outperforms the right Perron eigenvector, though both perform
well for large numbers of alternatives.

\end{abstract}

\textbf{Keywords}: column perturbed consistent matrix, decision analysis,
efficient vector, Perron eigenvector, reciprocal matrix, singular vector.

\textbf{MSC2020}: 90B50, 91B06, 15A18, 15B48 

\bigskip

\section{Introduction}

An $n$-by-$n$ positive matrix $A=[a_{ij}]$ is called \emph{reciprocal} if
$a_{ji}=\frac{1}{a_{ij}}$ for every pair $1\leq i,j\leq n$. The diagonal
entries are all $1$. We denote the set of such matrices by $\mathcal{PC}_{n}.$
Reciprocal matrices $A\in\mathcal{PC}_{n}$ arise as an array of pair-wise
(ratio) comparisons among $n$ alternatives. In this context, typically a
cardinal ranking vector is to be extracted from $A$ for decision problems
about business alternatives
\cite{anh,choo,dij,fichtner86,golany,is,Kula,zeleny}; and other applications,
in voting theory and currency exchange rates, are emerging. Historically, the
(right) Perron eigenvector of $A\ $has been used for this vector of weights,
as suggested in \cite{saaty1977,Saaty}. However, many natural concerns about
this choice have emerged (see, for example, \cite{johns}). Here, we suggest an
entirely new, but natural alternative, the Perron eigenvector of $AA^{T},$
sometimes called the \emph{left singular vector} of $A,$ as it is the left
singular vector of $A$ associated with the largest singular value.

The comparisons embodied in a reciprocal $A$ are consistent ($a_{ik}%
=a_{ij}a_{jk}$ for all $1\leq i,j,k\leq n$) if and only if, for some positive
$n$-vector $w,$ $A=ww^{(-T)}$ (in which $w^{(-T)}$ denotes the entry-wise
inverse of the transpose of $w$). In this case $w$ is the only natural weight
vector (up to a factor of scale), but consistency is unlikely in independent
pair-wise comparisons. One agreeable criterion to be a weight vector $w$ is
that the consistent matrix built from $w$, $W=ww^{(-T)}$, be a Pareto optimal
approximation to $A.$ This means that, if $\left\vert A-V\right\vert
\leq\left\vert A-W\right\vert ,$ absolute value entry-wise, in which
$V=vv^{(-T)}$ for a positive vector $v$, then $v$ is proportional to $w.$ In
this literature, such a positive vector $w$ is called \emph{efficient} (for
$A$). However, when $A$ is inconsistent, there will be infinitely many,
projectively distinct efficient vectors. Denote by $\mathcal{E}(A)$ the
collection of all vectors efficient for $A.$ It is now known that every column
of $A$ is efficient \cite{FJ1}, the entry-wise geometric convex hull of the
columns \cite{FJ2} and, in particular, the simple entry-wise geometric mean of
all the columns \cite{blanq2006}, are efficient. However, the Perron vector is
not always efficient \cite{bozoki2014,FJ3}. A graph characterization of
efficiency was given in \cite{blanq2006} (see also \cite{FJ2}). Several other
developments on efficiency have appeared (see, for example,
\cite{p6,p2,baj,european,CFF,FerFur,Fu22}).

Since the definition of efficiency for $A\in\mathcal{PC}_{n}$ involves rank
$1$ approximation to $A$, and the best rank $1$ approximation (in the
Frobenius norm) is the normalized outer product of the right and left singular
vectors of $A$ (associated with the spectral norm) \cite{HJ2}$,$ it seems
natural to test one of these for efficiency. Of course, the best rank 1
approximation is not usually reciprocal, let alone consistent.

Here, we primarily compare the left singular vector of $A\in\mathcal{PC}_{n}$
(i.e. the Perron vector of $AA^{T}$) to the Perron eigenvector under the lens
of efficiency. In the next section we amass the technical background that we
need. Then, in Section \ref{scone}, we develop some additional technology for
understanding efficiency. In particular, it is noted that, in the event that
$\mathcal{E}(A)$ is convex, both the singular vector and the Perron vector are
efficient. In Section \ref{s3} we give a unified proof that, among column
perturbed consistent matrices, both vectors are always efficient. Convexity of
$\mathcal{E}(A)$ occurs when $A$ lies in the class of simple perturbed
consistent matrices introduced in \cite{p6} (see also \cite{CFF}), but
unfortunately it is not yet known in general for which $A$ $\mathcal{E}(A)$ is
convex. In Section \ref{s2}, we accumulate considerable information comparing
the two vectors, via simulation. The singular vector performs better than the
Perron eigenvector in all dimensions $n>3$, both are more frequently efficient
the higher the dimension (approaching efficiency with probability $1$), and
more detailed comparison is presented. We give some final remarks in Section
\ref{s6}.

\section{Background\label{s22}}

The set $\mathcal{PC}_{n}$ is closed under monomial similarity, that is,
similarity via a product of a permutation matrix and a positive diagonal
matrix (monomial matrix) \cite{FJ1}. Fortunately, such transformation
interfaces with efficient vectors in a natural way.

\begin{lemma}
\label{lsim}\cite{FJ1} Suppose that $A\in\mathcal{PC}_{n}$ and $w\in
\mathcal{E}(A).$ If $S$ is an $n$-by-$n$ monomial matrix, then $Sw\in
\mathcal{E}(SAS^{-1})$.
\end{lemma}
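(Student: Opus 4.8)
The plan is to exploit two equivariance properties of monomial similarity: that it carries consistent matrices to consistent matrices compatibly with the map $w\mapsto Sw$, and that it preserves the entry-wise partial order after absolute values are taken. Write $S=PD$, with $P$ a permutation matrix (underlying permutation $\sigma$) and $D=\mathrm{diag}(d_{1},\dots,d_{n})$ positive diagonal, and set $B=SAS^{-1}$ and $u=Sw$. Since $\mathcal{PC}_{n}$ is closed under monomial similarity, $B\in\mathcal{PC}_{n}$, so it is meaningful to ask whether $u\in\mathcal{E}(B)$.

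First I would record the algebraic identity that, for every positive $n$-vector $x$,
\[
S\left(xx^{(-T)}\right)S^{-1}=(Sx)(Sx)^{(-T)}.
\]
This is a short entry-wise check: conjugation by $D$ sends the $(i,j)$ entry $x_{i}/x_{j}$ to $(d_{i}x_{i})/(d_{j}x_{j})$, the $(i,j)$ entry of $(Dx)(Dx)^{(-T)}$, while conjugation by $P$ merely relabels indices via $\sigma$, matching $(Px)(Px)^{(-T)}$; composing the two yields the claim. In particular $uu^{(-T)}=S\left(ww^{(-T)}\right)S^{-1}$.

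Second I would establish the order-preservation fact: for real matrices $M,N$,
\[
\left\vert SMS^{-1}\right\vert \leq\left\vert SNS^{-1}\right\vert \text{ entry-wise}\iff\left\vert M\right\vert \leq\left\vert N\right\vert \text{ entry-wise.}
\]
The key observation is that $\left\vert SMS^{-1}\right\vert_{ij}=\frac{d_{\sigma^{-1}(i)}}{d_{\sigma^{-1}(j)}}\left\vert M\right\vert_{\sigma^{-1}(i),\sigma^{-1}(j)}$, and the \emph{same} positive factor $d_{\sigma^{-1}(i)}/d_{\sigma^{-1}(j)}$ multiplies the corresponding entry of $\left\vert N\right\vert$. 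Hence at each position the inequality reduces, after cancelling this common positive factor, to comparing $\left\vert M\right\vert$ and $\left\vert N\right\vert$ at $(\sigma^{-1}(i),\sigma^{-1}(j))$; as $\sigma$ is a bijection, this is equivalent to the global entry-wise comparison of $\left\vert M\right\vert$ and $\left\vert N\right\vert$.

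With these tools the conclusion follows directly. Suppose $V'=v'(v')^{(-T)}$, with $v'$ positive, satisfies $\left\vert B-V'\right\vert \leq\left\vert B-uu^{(-T)}\right\vert$. Put $v=S^{-1}v'$, which is positive since $S^{-1}$ is monomial; then $V'=S\left(vv^{(-T)}\right)S^{-1}$ by the first identity, so with $V=vv^{(-T)}$ and $W=ww^{(-T)}$ we get $B-V'=S(A-V)S^{-1}$ and $B-uu^{(-T)}=S(A-W)S^{-1}$. The second fact pulls the hypothesis back to $\left\vert A-V\right\vert \leq\left\vert A-W\right\vert$, and efficiency of $w$ for $A$ forces $v$ to be proportional to $w$, whence $v'=Sv$ is proportional to $Sw=u$. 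Thus $u=Sw\in\mathcal{E}(SAS^{-1})$. I expect the only delicate point to be the order-preservation step, where one must verify that the positive scaling factor attached to a given position is identical for $SMS^{-1}$ and $SNS^{-1}$ so that it cancels in the inequality; the index bookkeeping for the permutation part requires care but poses no genuine difficulty.
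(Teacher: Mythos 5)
Your proof is correct. Note that the paper itself gives no proof of this lemma --- it is quoted from \cite{FJ1} --- so there is no in-paper argument to compare against; what you have written is the natural self-contained verification. Both of your supporting facts check out: the identity $S\bigl(xx^{(-T)}\bigr)S^{-1}=(Sx)(Sx)^{(-T)}$ holds entry-wise for any monomial $S=PD$ (conjugation by $D$ rescales the ratio $x_i/x_j$ to $(d_ix_i)/(d_jx_j)$, conjugation by $P$ relabels indices), and the order-preservation step is sound because each position of $SMS^{-1}$ carries the same positive factor $d_{\sigma^{-1}(i)}/d_{\sigma^{-1}(j)}$ as the corresponding position of $SNS^{-1}$, so the factor cancels in the entry-wise comparison and the bijectivity of the relabeling makes the two global inequalities equivalent. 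With these, your pull-back of a competing rank-one approximation $V'$ of $B=SAS^{-1}$ to a competing approximation $V=S^{-1}V'S$ of $A$, and the push-forward of the resulting proportionality $v\propto w$ to $v'=Sv\propto Sw$, is exactly the right use of the definition of efficiency, and it establishes $Sw\in\mathcal{E}(SAS^{-1})$ in full generality.
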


\bigskip

Given $A=[a_{ij}]\in\mathcal{PC}_{n}$ and a positive vector
\[
w=\left[
\begin{array}
[c]{ccc}%
w_{1} & \cdots & w_{n}%
\end{array}
\right]  ^{T},
\]
define $G(A,w)$ as the directed graph (digraph) with vertex set $\{1,\ldots
,n\}$ and a directed edge $i\rightarrow j$ if and only if $w_{i}\geq
a_{ij}w_{j}$, $i\neq j.$

In \cite{blanq2006} the authors proved that the efficiency of $w$ can be
determined using $G(A,w)$. In \textrm{\cite{FJ2}} a shorter matricial proof of
this result has been given.

\begin{theorem}
\textrm{\cite{blanq2006}}\label{blanq} Let $A\in\mathcal{PC}_{n}$. A positive
$n$-vector $w$ is efficient for $A$ if and only if $G(A,w)$ is a strongly
connected digraph, that is, for all pairs of vertices $i,j,$ with $i\neq j,$
there is a directed path from $i$ to $j$ in $G(A,w)$.
\end{theorem}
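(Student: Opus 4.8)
The plan is to translate the Pareto-optimality definition of efficiency into a purely order-theoretic condition on the vertices of $G(A,w)$, and then to recognize that condition as strong connectivity. Throughout, write $W=ww^{(-T)}$, so that $W_{ij}=w_{i}/w_{j}$, and observe first that $G(A,w)$ always has at least one edge between each pair of distinct vertices: if $W_{ij}\geq a_{ij}$ then $i\rightarrow j$ is present, while if $W_{ij}<a_{ij}$ then, by reciprocity, $W_{ji}=1/W_{ij}>1/a_{ij}=a_{ji}$, so $j\rightarrow i$ is present. In particular, $i\rightarrow j$ and $j\rightarrow i$ are both present exactly when $a_{ij}=W_{ij}$.

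Given a competitor $V=vv^{(-T)}$, I would set $y_{k}=v_{k}/w_{k}$, so that $V_{ij}=(w_{i}/w_{j})(y_{i}/y_{j})$ and $v$ is proportional to $w$ if and only if $y$ is constant. The key observation is that the inequalities $\left\vert A-V\right\vert \leq\left\vert A-W\right\vert $ force an order relation: for an edge $i\rightarrow j$ with $a_{ij}<W_{ij}$, the bound $\left\vert a_{ij}-V_{ij}\right\vert \leq W_{ij}-a_{ij}$ yields $V_{ij}\leq W_{ij}$, i.e. $y_{i}\leq y_{j}$; while for a bidirectional pair $a_{ij}=W_{ij}$ the residual is $0$, forcing $V_{ij}=a_{ij}$ and hence $y_{i}=y_{j}$. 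Thus, if $w$ is not efficient, the witnessing $v$ produces a non-constant $y$ with $y_{i}\leq y_{j}$ along every edge of $G(A,w)$. Conversely, given any such non-constant $y$, I would manufacture a genuine improvement by damping it: replacing $y$ by $y^{t}$ entrywise for small $t>0$ preserves the order relations, leaves the residuals of bidirectional pairs at $0$, and (for $t$ small enough, uniformly over the finitely many pairs) keeps each $V_{ij}$ on the same side of $a_{ij}$ as $W_{ij}$, so that no residual increases. Since the underlying undirected graph is complete, a non-constant $y$ must satisfy $y_{i}<y_{j}$ strictly along some single-direction edge, producing a strict decrease of that residual, so $w$ is not efficient. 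This establishes that $w$ is efficient if and only if every labeling $y$ with $y_{i}\leq y_{j}$ along all edges is constant.

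It then remains to prove the graph-theoretic equivalence: $G(A,w)$ is strongly connected if and only if every labeling that is nondecreasing along edges is constant. For the forward direction, strong connectivity supplies, for any $i,j$, directed paths in both directions, and chaining the inequalities along them gives $y_{i}\leq y_{j}$ and $y_{j}\leq y_{i}$, so $y$ is constant. For the contrapositive of the reverse direction, if $G(A,w)$ is not strongly connected, choose $p,q$ with no directed path from $p$ to $q$, let $R$ be the set of vertices reachable from $p$, and note that $R$ is closed under out-edges; assigning $y_{i}=2$ for $i\in R$ and $y_{i}=1$ otherwise gives a non-constant labeling that is nondecreasing along every edge. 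Combining the two equivalences yields the theorem.

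I expect the main obstacle to be the perturbation half of the first reduction, where one must pass from a mere order-respecting direction to an actual reciprocal competitor $V$ that worsens no entry: this requires the damping $y\mapsto y^{t}$ together with the uniform choice of $t$ guaranteeing no overshoot past $a_{ij}$, and the completeness remark that upgrades non-constancy to a strict improvement somewhere. The graph lemma, by contrast, is routine once the reachable-set construction is in hand.
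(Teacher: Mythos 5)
The paper does not actually prove this theorem: it is quoted from \cite{blanq2006}, and the authors point to \cite{FJ2} for a shorter matricial proof, so there is no in-paper argument to compare yours against. Your proposal is a correct, self-contained proof, essentially reconstructing the original graph-theoretic characterization. The reduction of efficiency to an order-theoretic condition via the labeling $y_{k}=v_{k}/w_{k}$ is sound: the domination inequalities do force $y_{i}\leq y_{j}$ along each single-direction edge and $y_{i}=y_{j}$ across bidirectional pairs, and the damping $y\mapsto y^{t}$ with a uniformly small $t>0$ produces a non-proportional competitor whose entries all lie between $a_{ij}$ and $W_{ij}$, hence weakly improve every residual, with a strict improvement along some single-direction edge (which exists for non-constant $y$ because, as you note, every pair of vertices carries at least one edge). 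Two points you leave implicit but that are easy to supply: (i) the mirrored entries $(j,i)$ --- the inequality $\left\vert a_{ji}-V_{ji}\right\vert \leq\left\vert a_{ji}-W_{ji}\right\vert$ does not follow from the $(i,j)$ inequality alone, but it does follow from your stronger conclusion that $V_{ij}$ lies between $a_{ij}$ and $W_{ij}$, since entrywise inversion is monotone and maps that interval onto the interval between $a_{ji}$ and $W_{ji}$; (ii) under the paper's Pareto formulation of efficiency (any dominating $v$ must be proportional to $w$), non-proportionality of your competitor already suffices, while the strict improvement you exhibit makes the argument valid also under the more common definition requiring strict improvement in some entry, so your proof covers both conventions. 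The concluding graph lemma --- nondecreasing labelings are constant if and only if the digraph is strongly connected, proved by chaining inequalities along directed paths in one direction and by the reachable-set construction in the other --- is correct as stated.
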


We recall that the (right) Perron eigenvector of an $n$-by-$n$ positive matrix
$A$ is a (right) eigenvector of $A$ associated with the spectral radius
$\rho(A)$ of $A$ (called the Perron eigenvalue of $A$) \cite{HJ}. It is known
that, up to a constant factor, all its entries are positive and, with some
normalization, as having the sum of the entries equal to $1,$ it is unique.

If a positive matrix has rank $1,$ then any column is a Perron vector of the
matrix (all the other projectively distinct eigenvectors are associated with
the eigenvalue $0$). When $A$ is consistent, $A=ww^{(-T)},$ any column of $A,$
and of $AA^{T},$ is a multiple of $w.$ Thus, the Perron vectors of $A$ and
$AA^{T}$ (as any column) are projectively equal to $w$ and, thus, are
efficient vectors for $A.$ Here, we focus upon the efficiency of the Perron
eigenvector of $AA^{T}$ for a reciprocal matrix $A.$ Note that, since $AA^{T}$
is symmetric, its right and left Perron vectors coincide.

\bigskip

We denote by $\mathcal{C}(A)$ the (convex) cone generated by the columns of
$A$, that is, the set of nonzero, nonnegative linear combinations of the
columns of $A$. Note that, according to our definition, $0$ is not in
$\mathcal{C}(A)$. It is important to note that both the right Perron vector of
$A$ and the Perron vector of $AA^{T}$ lie in $\mathcal{C}(A).$

\begin{lemma}
\label{LPSV}For any positive $n$-by-$n$ matrix $A$ (in particular,
$A\in\mathcal{PC}_{n}$), both the right Perron eigenvector of $A$ and the
Perron eigenvector of $AA^{T}$ lie in $\mathcal{C}(A).$
\end{lemma}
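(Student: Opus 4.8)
The plan is to exploit the defining eigen-equations together with the positivity of the Perron vectors guaranteed by the Perron--Frobenius theorem. For the right Perron eigenvector $x$ of $A$, I would start from $Ax=\rho(A)x$, where $\rho(A)>0$ since $A$ is positive, and rewrite this as $x=\frac{1}{\rho(A)}Ax$. Expanding $Ax=\sum_{j}x_{j}a_{\cdot j}$, where $a_{\cdot j}$ denotes the $j$-th column of $A$, exhibits $x$ as a linear combination of the columns of $A$ with coefficients $\frac{x_{j}}{\rho(A)}$. Since $x>0$ entry-wise, these coefficients are strictly positive and $x$ is nonzero, so $x\in\mathcal{C}(A)$ directly from the definition of the cone.

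For the Perron eigenvector $y$ of $AA^{T}$, the same idea applies with one extra step. From $AA^{T}y=\rho(AA^{T})y$ I would write $y=\frac{1}{\rho(AA^{T})}A(A^{T}y)$, so that $y$ is a linear combination of the columns of $A$ with coefficient vector $\frac{1}{\rho(AA^{T})}A^{T}y$. The point that needs verification is that this coefficient vector is nonnegative, and this is where the hypothesis that $A$ is \emph{positive} (rather than merely nonnegative) enters: $A^{T}$ is then positive and $y>0$, which forces $A^{T}y>0$. Together with $\rho(AA^{T})>0$, this places $y$ in $\mathcal{C}(A)$.

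The only delicate point is ensuring that the column coefficients are nonnegative, and this is immediate in both cases: for $A$ itself the coefficients are just the (positive) entries of its own Perron vector, and for $AA^{T}$ they are the entries of $A^{T}y$, whose positivity is inherited from the positivity of $A$ and of $y$. I would emphasize in the write-up that positivity of $A$ is essential for the second claim, and note that, since all coefficients are in fact strictly positive, each vector is represented as a genuine positive combination of all the columns of $A$.
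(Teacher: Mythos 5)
Your proposal is correct and follows essentially the same argument as the paper: write the eigen-equation, divide by the (positive) spectral radius, and observe that the resulting coefficient vector ($x/\rho(A)$ in the first case, $A^{T}y/\rho(AA^{T})$ in the second) is positive, exhibiting the Perron vector as a nonnegative combination of the columns of $A$. The paper phrases it slightly differently (showing $Av\in\mathcal{C}(A)$ and then noting $v$ is proportional to $Av$), but the decomposition and the key positivity observations are identical.
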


\begin{proof}
If $Av=\rho(A)v,$ $v$ may be taken to be entry-wise positive. So $Av$ lies in
$\mathcal{C}(A),$ and since $v$ is proportional to $Av,$ via the positive
number $\rho(A),$ $v\in\mathcal{C}(A)$ also. If $AA^{T}u=\rho(AA^{T})u$, since
$AA^{T}$ is positive, $\rho(AA^{T})$ is positive and $u$ may be taken to be
positive. Now, $A^{T}u$ is positive so that $A(A^{T}u)\in\mathcal{C}(A),$ and,
as $u$ is proportional, via $\rho(AA^{T}),$ $u\in\mathcal{C}(A)$ also.
\end{proof}

\bigskip

Two simple lemmas will be helpful.

\begin{lemma}
\label{lconvex2}Let $A$ and $S$ be $n$-by-$n$ matrices, with $S$ monomial. If
$\mathcal{C}(A)$ is the cone generated by the columns of $A$ then
$S\mathcal{C}(A)$ is the cone generated by the columns of $SAS^{-1}.$
\end{lemma}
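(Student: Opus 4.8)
The plan is to show the two cones $S\mathcal{C}(A)$ and $\mathcal{C}(SAS^{-1})$ coincide by proving mutual containment, exploiting the fact that $S$ is invertible (being a product of a permutation matrix and a positive diagonal matrix) and that $S^{-1}$ is again monomial. Let me denote the columns of $A$ by $a_1,\ldots,a_n$, so that $\mathcal{C}(A)$ consists of all vectors $\sum_{j=1}^n c_j a_j$ with $c_j\geq 0$ and not all $c_j$ zero.

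First I would observe that the columns of $SAS^{-1}$ can be described explicitly. Writing $S^{-1}=S'$, the $k$-th column of $SAS^{-1}$ is $SA(S'e_k)$, where $e_k$ is the $k$-th standard basis vector. Since $S'$ is monomial, $S'e_k$ is a positive scalar multiple of some standard basis vector $e_{\sigma(k)}$; hence the $k$-th column of $SAS^{-1}$ is a positive multiple of $S a_{\sigma(k)}$, i.e. of $S$ applied to a column of $A$. As $\sigma$ is a permutation, the columns of $SAS^{-1}$ are, up to individual positive scalings, exactly the images under $S$ of the columns of $A$.

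Next I would use this to close the argument. For the containment $S\mathcal{C}(A)\subseteq\mathcal{C}(SAS^{-1})$, take $Sx$ with $x=\sum_j c_j a_j\in\mathcal{C}(A)$; then $Sx=\sum_j c_j (Sa_j)$, and since each $Sa_j$ is a positive multiple of a column of $SAS^{-1}$, this is a nonnegative, not-all-zero combination of the columns of $SAS^{-1}$, hence lies in $\mathcal{C}(SAS^{-1})$. The reverse containment is symmetric, because $S^{-1}$ is also monomial and $(S^{-1})(SAS^{-1})(S^{-1})^{-1}=A$, so applying the same observation to $SAS^{-1}$ and $S^{-1}$ gives $S^{-1}\mathcal{C}(SAS^{-1})\subseteq\mathcal{C}(A)$, which rearranges to $\mathcal{C}(SAS^{-1})\subseteq S\mathcal{C}(A)$.

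I expect the only delicate point to be bookkeeping rather than mathematical depth: one must be careful that the positive diagonal scalings arising from $S$ and $S^{-1}$ never vanish (they are strictly positive precisely because $S$ is monomial), and that the exclusion of $0$ from the cone is respected — a nonnegative combination of the $Sa_j$ is zero only if the original combination was zero, since $S$ is invertible, so the ``not all zero'' and ``nonzero vector'' conditions transfer cleanly under $S$. No genuine obstacle arises; the statement is essentially the assertion that an invertible linear map sends the cone generated by a set of vectors to the cone generated by their images, combined with the fact that monomial conjugation permutes and positively rescales the columns.
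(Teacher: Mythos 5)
Your proof is correct and follows essentially the same route as the paper: both arguments rest on the observation that each column of $SAS^{-1}$ is a positive scalar multiple of $S$ applied to a (permuted) column of $A$, prove one containment directly from this, and obtain the reverse containment by applying the same result to the conjugated matrix with $S^{-1}$ in place of $S$. The only cosmetic differences are that the paper writes out the decomposition $S=DP$ explicitly where you argue abstractly via $S^{-1}e_k$, and that you prove the direct containment in the opposite direction from the paper.
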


\begin{proof}
It is enough to show that $\mathcal{C}(SAS^{-1})\subseteq S\mathcal{C}(A),$ as
then applying this result to $A$ instead of $SAS^{-1}$ we get the other
inclusion. Let $S=DP,$ in which $D=\operatorname*{diag}(\lambda_{1}%
,\ldots,\lambda_{n})$ is a positive diagonal matrix and $P$ is a permutation
matrix (the proof is similar if $S=PD$). Let $a_{i}$ and $a_{i}^{\prime}$
denote the $i$th columns of $A$ and $A^{\prime}=SAS^{-1},$ respectively,
$i=1,\ldots,n.$ Let $x_{1},\ldots,x_{n}\geq0$ (not all $0$)$.$ Then,
\[
x_{1}a_{1}^{\prime}+\cdots+x_{n}a_{n}^{\prime}=S\left(  \frac{x_{1}}%
{\lambda_{1}}a_{\sigma(1)}+\cdots+\frac{x_{n}}{\lambda_{n}}a_{\sigma
(n)}\right)  \in S\mathcal{C}(A),
\]
where $\sigma$ is the permutation of $\{1,\ldots,n\}$ associated with $P^{T}.$
\end{proof}

\begin{lemma}
\label{lconvex}If $\mathcal{C}$ is a convex subset of $\mathbb{R}^{n},$ and
$S$ is an $n$-by-$n$ real matrix, then $S\mathcal{C}$ is also convex.
\end{lemma}

\begin{proof}
Let $Sw_{1},Sw_{2}\ $ be arbitrary vectors in $S\mathcal{C}.$ Let $\lambda
_{1},\lambda_{2}>0$ summing to $1.$ Since $w_{1},w_{2}\in\mathcal{C}$ and
$\mathcal{C}$ is convex, $\lambda_{1}w_{1}+\lambda_{2}w_{2}\in\mathcal{C}.$
Then, $\lambda_{1}Sw_{1}+\lambda_{2}Sw_{2}=S(\lambda_{1}w_{1}+\lambda_{2}%
w_{2})\in S\mathcal{C}.$\bigskip
\end{proof}

\begin{corollary}
\label{cconv}Let $A\in\mathcal{PC}_{n}$ and $S$ be an $n$-by-$n$ monomial
matrix. Then, $\mathcal{E}(A)$ is convex if and only if $\mathcal{E}%
(SAS^{-1})$ is convex.
\end{corollary}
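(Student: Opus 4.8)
The plan is to reduce the statement to a single set identity, namely $\mathcal{E}(SAS^{-1}) = S\mathcal{E}(A)$, and then invoke Lemma \ref{lconvex}. First I would establish the inclusion $S\mathcal{E}(A) \subseteq \mathcal{E}(SAS^{-1})$: this is immediate from Lemma \ref{lsim}, since every $w \in \mathcal{E}(A)$ yields $Sw \in \mathcal{E}(SAS^{-1})$.

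For the reverse inclusion, I would use the fact that the inverse of a monomial matrix is again monomial, together with the identity $S^{-1}(SAS^{-1})S = A$. Applying Lemma \ref{lsim} with $SAS^{-1}$ in the role of $A$ and $S^{-1}$ in the role of $S$, any $u \in \mathcal{E}(SAS^{-1})$ produces $S^{-1}u \in \mathcal{E}(A)$, whence $u = S(S^{-1}u) \in S\mathcal{E}(A)$. Combining the two inclusions gives the equality $\mathcal{E}(SAS^{-1}) = S\mathcal{E}(A)$.

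With this identity in hand, the two directions of the corollary follow symmetrically. If $\mathcal{E}(A)$ is convex, then by Lemma \ref{lconvex} (applied to the real matrix $S$) the image $S\mathcal{E}(A) = \mathcal{E}(SAS^{-1})$ is convex. Conversely, if $\mathcal{E}(SAS^{-1})$ is convex, then applying Lemma \ref{lconvex} to the real matrix $S^{-1}$ shows that $S^{-1}\mathcal{E}(SAS^{-1}) = \mathcal{E}(A)$ is convex.

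I do not expect any serious obstacle here; the only point requiring care is the bookkeeping that $S^{-1}$ is monomial and that conjugation by $S^{-1}$ returns $A$, so that Lemma \ref{lsim} can legitimately be applied in \emph{both} directions and thereby upgrade the one-sided inclusion $S\mathcal{E}(A) \subseteq \mathcal{E}(SAS^{-1})$ into the full set equality that the convexity argument needs.
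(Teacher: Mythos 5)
Your proposal is correct and follows essentially the same route as the paper: the paper's proof also asserts the equality $\mathcal{E}(SAS^{-1})=S\mathcal{E}(A)$ via Lemma \ref{lsim} and then applies Lemma \ref{lconvex} in both directions. You merely spell out the detail the paper leaves implicit, namely that the equality requires applying Lemma \ref{lsim} a second time with $S^{-1}$ (which is again monomial) to get the reverse inclusion.
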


\begin{proof}
By Lemma \ref{lsim}, $\mathcal{E}(SAS^{-1})=S\mathcal{E}(A).$ Thus, by Lemma
\ref{lconvex}, $\mathcal{E}(A)$ convex implies $\mathcal{E}(SAS^{-1})$ convex.
The converse follows similarly.
\end{proof}

\section{Efficiency and the cone generated by the columns of a reciprocal
matrix\label{scone}}

The cone generated by the columns of $A\in\mathcal{PC}_{n}$ may or may not be
contained in $\mathcal{E}(A)$. But it is quite helpful when it is.

\begin{theorem}
\label{t1}Let $A\in\mathcal{PC}_{n}$ and suppose that $\mathcal{C}%
(A)\mathcal{\subseteq E}(A)$. Then, both the Perron vector of $AA^{T}$ and the
right Perron vector of $A$ lie in $\mathcal{E}(A).$ Moreover, $\mathcal{C}%
(AA^{T})\subseteq\mathcal{E}(A).$
\end{theorem}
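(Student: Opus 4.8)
The plan is to leverage Lemma \ref{LPSV}, which already places both the right Perron vector of $A$ and the Perron vector of $AA^{T}$ inside the cone $\mathcal{C}(A)$. Under the hypothesis $\mathcal{C}(A)\subseteq\mathcal{E}(A)$, the first two conclusions are then immediate: any vector in $\mathcal{C}(A)$ is efficient, so in particular these two Perron vectors are efficient for $A$. The only substantive work lies in the final assertion, $\mathcal{C}(AA^{T})\subseteq\mathcal{E}(A)$, which is a genuinely stronger statement than merely locating the single Perron vector of $AA^{T}$.

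For that last inclusion, I would argue as follows. A generic element of $\mathcal{C}(AA^{T})$ is a nonzero nonnegative combination of the columns of $AA^{T}$. Writing $AA^{T}=A(A^{T})$, the $j$th column of $AA^{T}$ is $A$ times the $j$th column of $A^{T}$, i.e. $A$ applied to the $j$th row of $A$ written as a column vector. Since $A\in\mathcal{PC}_{n}$ is entry-wise positive, each row of $A$ is a positive vector, so each column of $A^{T}$ is positive; hence each column of $AA^{T}$ lies in $\mathcal{C}(A)$ (it is $A$ times a positive vector, which is a positive combination of the columns of $A$). Taking a nonnegative combination of such columns keeps us in the convex cone $\mathcal{C}(A)$, because a nonnegative combination of vectors each of which is a positive combination of the columns of $A$ is again a nonnegative (in fact positive, as long as the overall combination is nonzero) combination of the columns of $A$. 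Therefore $\mathcal{C}(AA^{T})\subseteq\mathcal{C}(A)$, and combining this with the hypothesis gives $\mathcal{C}(AA^{T})\subseteq\mathcal{C}(A)\subseteq\mathcal{E}(A)$.

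The one point that needs care, and which I expect to be the main (mild) obstacle, is verifying that the resulting combination of columns of $A$ is genuinely nonzero and nonnegative so that it legitimately belongs to $\mathcal{C}(A)$ under the paper's convention that $0\notin\mathcal{C}(A)$. Because $A$ is positive, any nonzero nonnegative combination of its columns is a strictly positive vector, so nonvanishing is automatic; I would simply note that the coefficients multiplying the columns of $A$ are sums of products of the (positive) entries of $A^{T}$ with the original nonnegative, not-all-zero coefficients, hence nonnegative and not all zero. This disposes of the degenerate case and completes the argument.

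I should remark that this proof does not require $A^{T}$ to be reciprocal or any special structure beyond positivity of $A$; the inclusion $\mathcal{C}(AA^{T})\subseteq\mathcal{C}(A)$ holds for any entry-wise positive $A$, and it is only the final step invoking $\mathcal{C}(A)\subseteq\mathcal{E}(A)$ that uses the hypothesis. This cleanly explains why controlling the cone $\mathcal{C}(A)$ is enough to control efficiency of the singular vector, the Perron vector, and indeed of the whole larger cone $\mathcal{C}(AA^{T})$ simultaneously.
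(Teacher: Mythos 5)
Your proposal is correct and follows essentially the same route as the paper's proof: Lemma \ref{LPSV} plus the hypothesis handles both Perron vectors, and the inclusion $\mathcal{C}(AA^{T})\subseteq\mathcal{C}(A)$ gives the final claim. The only difference is that you supply the (correct) details of why $\mathcal{C}(AA^{T})\subseteq\mathcal{C}(A)$ holds for any positive matrix, a fact the paper simply asserts.
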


\begin{proof}
By Lemma \ref{LPSV}, both the right Perron eigenvector of $A$ and the Perron
eigenvector of $AA^{T}$ lie in $\mathcal{C}(A),$ so that the first claim
follows. The second claim is a consequence of the fact that $\mathcal{C}%
(AA^{T})\mathcal{\subseteq C}(A)$.
\end{proof}

\bigskip Recall that each column of $A\in\mathcal{PC}_{n}$ lies in
$\mathcal{E}(A)$. So, if $\mathcal{E}(A)$ is convex, $\mathcal{C}%
(A)\subseteq\mathcal{E}(A).$

\begin{corollary}
Let $A\in\mathcal{PC}_{n}$ and suppose that $\mathcal{E}(A)$ is convex. Then,
both the Perron vector of $AA^{T}$ and the right Perron vector of $A$ lie in
$\mathcal{E}(A).$
\end{corollary}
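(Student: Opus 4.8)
The plan is to deduce this directly from Theorem \ref{t1}: it suffices to derive the hypothesis $\mathcal{C}(A)\subseteq\mathcal{E}(A)$ from the convexity assumption, which is exactly what the remark preceding the statement foreshadows. So the real content is to make that reduction precise and then invoke Theorem \ref{t1}.

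First I would recall the known fact from \cite{FJ1} that every column of $A$ is efficient, so that each column of $A$ lies in $\mathcal{E}(A)$. Next I would observe that efficiency is scale-invariant: for a positive vector $w$ and a scalar $\lambda>0$, the consistent matrix $(\lambda w)(\lambda w)^{(-T)}=ww^{(-T)}$ is unchanged, so $w\in\mathcal{E}(A)$ if and only if $\lambda w\in\mathcal{E}(A)$. Hence $\mathcal{E}(A)$ is closed under multiplication by positive scalars. Combined with the convexity hypothesis, this says $\mathcal{E}(A)$ is a convex cone.

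With these two facts in hand I would show $\mathcal{C}(A)\subseteq\mathcal{E}(A)$ as follows. Let $a_{1},\ldots,a_{n}$ be the columns of $A$ and let $x_{1},\ldots,x_{n}\geq0$, not all zero, so that $x_{1}a_{1}+\cdots+x_{n}a_{n}$ is a typical element of $\mathcal{C}(A)$. Setting $s=x_{1}+\cdots+x_{n}>0$, the vector $\frac{1}{s}(x_{1}a_{1}+\cdots+x_{n}a_{n})$ is a convex combination of those columns with $x_{i}>0$; since $\mathcal{E}(A)$ is convex and contains every column, this convex combination lies in $\mathcal{E}(A)$. Multiplying by the positive scalar $s$ and using scale-invariance returns the original vector to $\mathcal{E}(A)$, so $\mathcal{C}(A)\subseteq\mathcal{E}(A)$. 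Theorem \ref{t1} then gives at once that both the Perron vector of $AA^{T}$ and the right Perron vector of $A$ lie in $\mathcal{E}(A)$.

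There is no genuinely hard step here; the only point requiring care is the scale-invariance of efficiency, which upgrades the plain convexity hypothesis to the statement that $\mathcal{E}(A)$ is a convex cone. Without that observation one would conclude only that the convex hull of the columns lies in $\mathcal{E}(A)$, rather than the full cone $\mathcal{C}(A)$ that is needed to apply Theorem \ref{t1}.
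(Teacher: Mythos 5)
Your proof is correct and takes essentially the same route as the paper: the remark immediately preceding the corollary makes exactly your reduction (every column of $A$ is efficient, so convexity of $\mathcal{E}(A)$ yields $\mathcal{C}(A)\subseteq\mathcal{E}(A)$), after which Theorem \ref{t1} gives the conclusion. Your explicit use of scale-invariance of efficiency is a worthwhile clarification of a point the paper leaves implicit --- convexity alone would only capture the convex hull of the columns, and one does need closure under positive scaling to obtain the full cone $\mathcal{C}(A)$.
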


\bigskip

In Theorem \ref{t1}, we have identified a situation ($\mathcal{C}(A)$
contained in $\mathcal{E}(A)$) in which both the left singular vector and the
Perron vector of a reciprocal matrix are efficient for the matrix. If $n=3,$
this situation always occurs since $\mathcal{E}(A)$ is necessarily convex, as
we shall see. However, for any $n>3,$ there exists $A\in\mathcal{PC}_{n}$ such
that $\mathcal{C}(A)$ is not contained in $\mathcal{E}(A),$ in which case
$\mathcal{E}(A)$ is not convex. Any $A\ $with inefficient right Perron vector
establishes this (and this situation may occur for any $n\geq4$
\cite{bozoki2014,FJ3}). When $\mathcal{C}(A)$ is not contained in
$\mathcal{E}(A),$ either or both the Perron vector of $AA^{T}$ or the right
Perron vector of $A$ (or none) may lie in $\mathcal{E}(A)$.

\begin{example}
\label{ex01}Consider the reciprocal matrix%
\[
A=\left[
\begin{array}
[c]{ccccc}%
1 & 1.1742 & 0.5647 & 4.4912 & 0.3633\\
0.8516 & 1 & 1.4198 & 0.734\, & 0.8444\\
1.7709 & 0.7043 & 1 & 1.3358 & 1.7356\\
0.2227 & 1.3624 & 0.7486 & 1 & 5.4467\\
2.7525 & 1.1843 & 0.5762 & 0.1836 & 1
\end{array}
\allowbreak\right]  .
\]
The Perron eigenvectors of $A$ and $AA^{T}$ are, respectively,%
\[
w=\left[
\begin{array}
[c]{ccccc}%
1.4008 & 0.8134 & 1.1503 & 1.3488 & 1
\end{array}
\right]  ^{T}\text{ and }v=\left[
\begin{array}
[c]{ccccc}%
1.4667 & 0.8522 & 1.3142 & 2.2704 & 1
\end{array}
\right]  ^{T}.
\]
The vector $w$ is not efficient for $A,$ while $v$ is.
\end{example}

\begin{example}
\label{ex02}Consider the reciprocal matrix%
\[
A=\left[
\begin{array}
[c]{ccccc}%
1 & 1.6119 & 1.1855 & 2.1413 & 2.6124\\
0.6204 & 1 & 1.6338 & 3.7767 & 2.1376\\
0.8435 & 0.6121 & 1 & 0.2347 & 4.6488\\
0.4670 & 0.2648 & 4.2608 & 1 & 0.2462\\
0.3828 & 0.4678 & 0.2151 & 4.0617 & 1
\end{array}
\right]  .
\]
$\allowbreak$The Perron eigenvectors of $A$ and $AA^{T}$ are, respectively,%
\[
w=\left[
\begin{array}
[c]{ccccc}%
1.4842 & 1.5318 & 1.1940 & 1.0829 & 1
\end{array}
\right]  ^{T}\text{ and }v=\left[
\begin{array}
[c]{ccccc}%
1.1507 & 1.3620 & 1.0590 & 0.7746 & 1
\end{array}
\right]  ^{T}.
\]
The vector $w$ is efficient for $A,$ while $v$ is not.
\end{example}

\begin{example}
\label{ex03}Consider the reciprocal matrix%
\[
A=\left[
\begin{array}
[c]{ccccc}%
1 & 1.2767 & 0.5382 & 1.9522 & 0.2486\\
0.7833 & 1 & 0.5521 & 0.2793 & 2.2088\\
1.858 & 1.8113 & 1 & 1.54 & 1.1093\\
0.5122 & 3.5804 & 0.64935 & 1 & 0.7887\\
4.0225 & 0.4527 & 0.9015 & 1.2679 & 1
\end{array}
\right]  .
\]
$\allowbreak$The Perron eigenvectors of $A$ and $AA^{T}$ are, respectively,%
\[
w=\left[
\begin{array}
[c]{ccccc}%
0.6713 & 0.7041 & 0.9907 & 0.8591 & 1
\end{array}
\right]  ^{T}\text{ and }v=\left[
\begin{array}
[c]{ccccc}%
0.6131 & 0.5337 & 0.8907 & 0.8152 & 1
\end{array}
\right]  ^{T}.
\]
Both vectors $w$ and $v$ are not efficient for $A$.
\end{example}

Note that, from Theorem \ref{t1}, in Examples \ref{ex01}, \ref{ex02} and
\ref{ex03} $\mathcal{C}(A)$ is not contained in $\mathcal{E}(A).$

\begin{example}
\label{ex4}Consider the reciprocal matrix
\[
A=\left[
\begin{array}
[c]{ccccc}%
1 & 0.4245 & 0.21 & 1.0186 & 0.7814\\
2.3556 & 1 & 0.0168 & 5.6974 & 0.9107\\
4.7619 & 59.524 & 1 & 0.4913 & 0.1877\\
0.9817 & 0.1755 & 2.0354 & 1 & 1.5855\\
1.2798 & 1.0981 & 5.3277 & 0.6307 & 1
\end{array}
\right]  .
\]
The Perron eigenvectors of $A$ and $AA^{T}$ are, respectively,%
\[
w=\left[
\begin{array}
[c]{ccccc}%
0.1623 & 0.3658 & 1.9915 & 0.5080 & 1
\end{array}
\right]  ^{T}\text{ and }v=\left[
\begin{array}
[c]{ccccc}%
0.3988 & 0.9565 & 45.6075 & 0.2369 & 1
\end{array}
\right]  ^{T}.
\]
Both vectors $w$ and $v$ are efficient for $A$. However $w+v$ is not efficient
for $A,$ showing that $\mathcal{C}(A)$ is not contained in $\mathcal{E}(A).$
\end{example}

\section{Column perturbed consistent matrices\label{s3}}

$A\in\mathcal{PC}_{n}$ is called a \emph{column perturbed consistent matrix}
if $A$ has a consistent $(n-1)$-by-$(n-1)$ principal submatrix or, in other
words, if $A$ is a reciprocal matrix obtained from a consistent matrix by
modifying one column and the corresponding row. (Of course, if a column
perturbed consistent matrix has rank 1, it is consistent.) Since an
$(n-1)$-by-$(n-1)$ consistent matrix is positive diagonal similar to the
matrix $J_{n-1}$ of all $1$'s, and the columns of $A$ may be permuted to be in
any desired order, it follows that $A$ is monomially similar to a matrix of
the form
\begin{equation}
\left[
\begin{tabular}
[c]{c|c}%
$J_{n-1}$ & $%
\begin{array}
[c]{c}%
x_{1}\\
\vdots\\
x_{n-1}%
\end{array}
$\\\hline
$%
\begin{array}
[c]{ccc}%
\frac{1}{x_{1}} & \cdots & \frac{1}{x_{n-1}}%
\end{array}
$ & $1$%
\end{tabular}
\ \right]  , \label{AA}%
\end{equation}
with $x_{1}\geq\cdots\geq x_{n-1}>0.$

\begin{theorem}
\label{t6}Suppose that $A\in\mathcal{PC}_{n}$ is a column perturbed consistent
matrix. Then, $\mathcal{C}(A)\mathcal{\subseteq E}(A).$
\end{theorem}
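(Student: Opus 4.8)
The plan is to reduce $A$ to the canonical form (\ref{AA}) and then verify, for an arbitrary vector of the cone, the graph criterion of Theorem \ref{blanq}. First I would record that both the hypothesis and the conclusion are invariant under monomial similarity: by Lemma \ref{lsim}, $\mathcal{E}(SAS^{-1})=S\mathcal{E}(A)$, and by Lemma \ref{lconvex2}, $\mathcal{C}(SAS^{-1})=S\mathcal{C}(A)$; since $S$ is invertible, $\mathcal{C}(A)\subseteq\mathcal{E}(A)$ holds if and only if $\mathcal{C}(SAS^{-1})\subseteq\mathcal{E}(SAS^{-1})$ does. Hence I may assume that $A$ is exactly the matrix displayed in (\ref{AA}), with $x_1\geq\cdots\geq x_{n-1}>0$.

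Next I would parametrize a generic element of $\mathcal{C}(A)$. Writing $w=\sum_{j=1}^{n}c_ja_j$ with $c_j\geq0$ not all zero, and setting $C=\sum_{k=1}^{n-1}c_k$ and $S=\sum_{k=1}^{n-1}c_k/x_k$, a direct computation gives $w_i=C+c_nx_i$ for $i\leq n-1$ and $w_n=S+c_n$ (note $w$ is positive, since $A$ is). Reading the edges of $G(A,w)$ off the definition: for $i,j\leq n-1$ the edge $i\to j$ holds iff $c_nx_i\geq c_nx_j$; the edge $i\to n$ holds iff $C\geq x_iS$; and the edge $n\to j$ holds iff $x_jS\geq C$. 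The goal is then to show $G(A,w)$ is strongly connected.

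The heart of the argument is to make vertex $n$ a hub that both reaches and is reached from every other vertex. Two inequalities do the work, and both hold for every admissible $c$: expanding, $x_1S-C=\sum_{k}c_k(x_1-x_k)/x_k\geq0$ and $C-x_{n-1}S=\sum_{k}c_k(x_k-x_{n-1})/x_k\geq0$, because $x_1$ and $x_{n-1}$ are the largest and smallest of the $x_k$. These give the bridging edges $n\to 1$ and $(n-1)\to n$. Furthermore, vertex $1$ satisfies $1\to j$ for all $j\leq n-1$, and every $i\leq n-1$ satisfies $i\to(n-1)$, since $x_1\geq x_j$ and $x_i\geq x_{n-1}$ make the relevant inequality hold whether $c_n>0$ or $c_n=0$. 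Chaining these, $n\to 1\to j$ shows $n$ reaches every $j\leq n-1$, while $i\to(n-1)\to n$ shows every $i\leq n-1$ reaches $n$; strong connectivity of $G(A,w)$ follows, so $w\in\mathcal{E}(A)$ by Theorem \ref{blanq}.

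I expect the main obstacle to be the ``large'' indices $i\leq n-1$ with $x_iS>C$, which have no direct edge to $n$; the fix is exactly the two-step routing through the extremal indices $1$ and $n-1$, and what guarantees it is that the monotone ordering of the $x_k$ forces $x_1S\geq C\geq x_{n-1}S$. The single degenerate case, $S=0$ (equivalently $c_1=\cdots=c_{n-1}=0$, so $w$ is a positive multiple of the last column), is covered by the very same argument, since both inequalities then hold with equality; alternatively, a multiple of a column of $A$ is efficient \cite{FJ1}.
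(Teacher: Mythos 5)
Your proposal is correct and takes essentially the same approach as the paper: reduce to the canonical form (\ref{AA}) by monomial similarity (Lemmas \ref{lsim} and \ref{lconvex2}), take an arbitrary nonnegative combination of the columns, and verify strong connectivity of $G(A,w)$ via Theorem \ref{blanq}, using exactly the same inequalities coming from the ordering $x_{1}\geq\cdots\geq x_{n-1}$. The only cosmetic difference is the witnessing subgraph: the paper exhibits the Hamiltonian cycle $1\rightarrow2\rightarrow\cdots\rightarrow n\rightarrow1$, while you route all paths through the hub vertices $1$, $n-1$ and $n$.
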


\begin{proof}
Taking into account Lemmas \ref{lsim} and Lemma \ref{lconvex2}, we may assume
that $A$ has the form (\ref{AA}) with $x_{1}\geq\cdots\geq x_{n-1}>0.$
Consider the linear combination of the columns of $A$: $w=As,$ with
\[
s=\left[
\begin{array}
[c]{c}%
s_{1}\\
\vdots\\
s_{n-1}\\
s_{n}%
\end{array}
\right]  ,
\]
$s_{i}\geq0,$ $i=1,\ldots,n$ (not all $0$)$.$ We want to see that
$w\in\mathcal{E}(A),$ or, equivalently, by Theorem \ref{blanq}, that $G(A,w)$
is strongly connected. Let $ww^{(-T)}-A=[b_{ij}].$ We will show that
$b_{i,i+1}\geq0$ for $i=1,\ldots,n-1,$ and $b_{n,1}\geq0,$ implying that
\[
1\rightarrow2\rightarrow\cdots\rightarrow n\rightarrow1
\]
is a cycle in $G(A,w)$, and, thus, $G(A,w)$ is strongly connected.

Let $a_{i}^{T}$ denote the $i$th row of $A.$ For $i=1,\ldots,n-2,$ we have
\begin{align*}
b_{i,i+1}  &  =\left(  a_{i}^{T}s\right)  \left(  a_{i+1}^{T}s\right)
^{(-1)}-1\\
&  =\frac{s_{1}+\cdots+s_{n-1}+x_{i}s_{n}}{s_{1}+\cdots+s_{n-1}+x_{i+1}s_{n}%
}-1\\
&  =\frac{s_{n}(x_{i}-x_{i+1})}{s_{1}+\cdots+s_{n-1}+x_{i+1}s_{n}}\geq0.
\end{align*}

We also have
\begin{align*}
b_{n-1,n}  &  =\left(  a_{n-1}^{T}s\right)  \left(  a_{n}^{T}s\right)
^{(-1)}-x_{n-1}\\
&  =\frac{s_{1}+\cdots+s_{n-1}+x_{n-1}s_{n}}{\frac{s_{1}}{x_{1}}+\cdots
+\frac{s_{n-1}}{x_{n-1}}+s_{n}}-x_{n-1}\\
&  =\frac{s_{1}+\cdots+s_{n-1}-x_{n-1}(\frac{s_{1}}{x_{1}}+\cdots
+\frac{s_{n-1}}{x_{n-1}})}{\frac{s_{1}}{x_{1}}+\cdots+\frac{s_{n-1}}{x_{n-1}%
}+s_{n}}\\
&  =\frac{s_{1}(1-\frac{x_{n-1}}{x_{1}})+\cdots+s_{n-2}(1-\frac{x_{n-1}%
}{x_{n-2}})}{\frac{s_{1}}{x_{1}}+\cdots+\frac{s_{n-1}}{x_{n-1}}+s_{n}}\geq0.
\end{align*}
Finally,%
\begin{align*}
b_{n,1}  &  =\left(  a_{n}^{T}s\right)  \left(  a_{1}^{T}s\right)
^{(-1)}-\frac{1}{x_{1}}\\
&  =\frac{\frac{s_{1}}{x_{1}}+\cdots+\frac{s_{n-1}}{x_{n-1}}+s_{n}}%
{s_{1}+\cdots+s_{n-1}+x_{1}s_{n}}-\frac{1}{x_{1}}\\
&  =\frac{\frac{s_{1}}{x_{1}}+\cdots+\frac{s_{n-1}}{x_{n-1}}-\frac{1}{x_{1}%
}(s_{1}+\cdots+s_{n-1})}{s_{1}+\cdots+s_{n-1}+x_{1}s_{n}}\\
&  =\frac{s_{2}(\frac{1}{x_{2}}-\frac{1}{x_{1}})+\cdots+s_{n-1}(\frac
{1}{x_{n-1}}-\frac{1}{x_{1}})}{s_{1}+\cdots+s_{n-1}+x_{1}s_{n}}\geq0.
\end{align*}

\end{proof}

\begin{corollary}
If $A\in\mathcal{PC}_{n}$ is a column perturbed consistent matrix, then both
the Perron vector of $AA^{T}$ and the right Perron vector of $A$ lie in
$\mathcal{E}(A).$
\end{corollary}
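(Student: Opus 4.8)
The plan is to obtain this corollary as an immediate consequence of the two preceding theorems, so essentially no new work is required. First I would invoke Theorem \ref{t6}, which asserts that whenever $A\in\mathcal{PC}_{n}$ is a column perturbed consistent matrix, the cone $\mathcal{C}(A)$ generated by its columns satisfies $\mathcal{C}(A)\subseteq\mathcal{E}(A)$. This is precisely the hypothesis needed to activate Theorem \ref{t1}.

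Next I would apply Theorem \ref{t1} to this same $A$. That theorem guarantees that, under the assumption $\mathcal{C}(A)\subseteq\mathcal{E}(A)$, both the Perron vector of $AA^{T}$ and the right Perron vector of $A$ lie in $\mathcal{E}(A)$ (and in fact that $\mathcal{C}(AA^{T})\subseteq\mathcal{E}(A)$ as well). Chaining the two statements yields the claim directly: Theorem \ref{t6} verifies the hypothesis of Theorem \ref{t1}, and Theorem \ref{t1} then delivers the conclusion.

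Since both ingredients are already established, there is no genuine obstacle at the level of the corollary itself. The entire substantive content lives in the proof of Theorem \ref{t6}: the computation of the entries $b_{i,i+1}$ and $b_{n,1}$ of $ww^{(-T)}-A$, whose nonnegativity exhibits the Hamiltonian cycle $1\rightarrow2\rightarrow\cdots\rightarrow n\rightarrow1$ in $G(A,w)$, forcing strong connectivity and hence, via Theorem \ref{blanq}, efficiency of every $w\in\mathcal{C}(A)$. The complementary role of Lemma \ref{LPSV}, namely that both Perron vectors always lie in $\mathcal{C}(A)$, is what turns membership in the cone into membership in $\mathcal{E}(A)$; that step is already absorbed into the statement of Theorem \ref{t1}. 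Thus the only point to check is that the hypothesis of Theorem \ref{t1} holds for column perturbed consistent matrices, which is exactly what Theorem \ref{t6} provides, and the corollary follows with a one-line argument.
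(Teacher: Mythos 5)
Your proposal is correct and matches the paper's own proof exactly: the paper likewise derives the corollary by combining Theorem \ref{t6} (which supplies the hypothesis $\mathcal{C}(A)\subseteq\mathcal{E}(A)$) with Theorem \ref{t1}. Nothing further is needed.
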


\begin{proof}
Applying Theorems \ref{t1} and \ref{t6}, gives the desired conclusion.
\end{proof}

\bigskip

The efficiency of the Perron vector of a column perturbed consistent matrix
was obtained in \cite{FJ3}, but this gives a unified and simpler proof.

\bigskip It may happen that, for a column perturbed consistent matrix
$A\in\mathcal{PC}_{n},$ we have that $\mathcal{E}(A)$ is not convex. For this
to happen it must be that $n>3,$ as we shall see after the next example.

\begin{example}
Let%
\[
A=\left[
\begin{array}
[c]{cccccc}%
1 & 1 & 1 & 1 & 1 & 4\\
1 & 1 & 1 & 1 & 1 & 3\\
1 & 1 & 1 & 1 & 1 & 2\\
1 & 1 & 1 & 1 & 1 & 1\\
1 & 1 & 1 & 1 & 1 & 1\\
\frac{1}{4} & \frac{1}{3} & \frac{1}{2} & 1 & 1 & 1
\end{array}
\right]  .
\]
Let%
\[
q=\left[
\begin{array}
[c]{cccccc}%
4 & 3 & 4 & 2 & 2 & 1
\end{array}
\right]  ^{T}%
\]
and%
\[
p=\left[
\begin{array}
[c]{cccccc}%
3 & 4 & 4 & 2 & 2 & 2
\end{array}
\right]  ^{T}.
\]
The vectors $p$ and $q$ are efficient for $A.$ However, $p+q$ is not efficient
for $A,$ implying that $\mathcal{E}(A)$ is not convex.
\end{example}

A reciprocal matrix $A\in\mathcal{PC}_{n}$ that is obtained from a consistent
matrix by changing two entries in symmetrically placed positions is a
particular case of a column perturbed consistent matrix, and was called in
\cite{p6,CFF} \emph{a simple perturbed consistent matrix}. Thus, as a
consequence of Theorem \ref{t6}, $\mathcal{C}(A)\mathcal{\subseteq E}(A),$
implying that the Perron vector of $AA^{T}$ and of $A$ are efficient. This
gives a much simpler proof of the known efficiency of the Perron vector of $A$
\cite{p6,CFF}. We note that any $3$-by-$3$ reciprocal matrix is a simple
perturbed consistent matrix \cite{FJ1}.

A simple perturbed consistent matrix $A\in\mathcal{PC}_{n}$ is monomial
similar to a matrix, say $S_{n}(x_{1}),$ as in (\ref{AA}), with $x_{1}\geq1$
and $x_{2}=\cdots=x_{n-1}=1.$ The efficient vectors $w$ for $S_{n}(x_{1})$
were described in \cite{CFF}. Then, by Lemma \ref{lsim}, the efficient vectors
for $A$ may be obtained.

\begin{lemma}
\cite{CFF}\label{tmain} Let $n\geq3$ and $x\geq1.$ Let $w=\left[
\begin{array}
[c]{cccc}%
w_{1} & \cdots & w_{n-1} & w_{n}%
\end{array}
\right]  ^{T}$ be a positive $n$-vector. Then $w$ is efficient for
$S_{n}(x)\in\mathcal{PC}_{n}$ if and only if
\[
w_{n}\leq w_{i}\leq w_{1}\leq xw_{n},\text{ for }i=2,\ldots,n-1.
\]

\end{lemma}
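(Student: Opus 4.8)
The plan is to reduce everything to the graph characterization of Theorem \ref{blanq}: $w$ is efficient for $S_{n}(x)$ if and only if the digraph $G:=G(S_{n}(x),w)$ is strongly connected. The first step is to read off the edges of $G$ from the entries of $S_{n}(x)$, which has the form (\ref{AA}) with $x_{1}=x$ and $x_{2}=\cdots=x_{n-1}=1$. For $i,j\in\{1,\dots,n-1\}$ with $i\neq j$, the edge $i\rightarrow j$ is present exactly when $w_{i}\geq w_{j}$, so on the first $n-1$ vertices $G$ merely records the order of $w_{1},\dots,w_{n-1}$. The only edges whose presence involves the factor $x$ are those between vertices $1$ and $n$: $1\rightarrow n$ iff $w_{1}\geq xw_{n}$ and $n\rightarrow1$ iff $xw_{n}\geq w_{1}$; while for $i\in\{2,\dots,n-1\}$ one has $i\rightarrow n$ iff $w_{i}\geq w_{n}$ and $n\rightarrow i$ iff $w_{n}\geq w_{i}$. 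Writing (i) for $w_{i}\leq w_{1}$, (ii) for $w_{n}\leq w_{i}$ (both for $i=2,\dots,n-1$), and (iii) for $w_{1}\leq xw_{n}$, I note that these three together are equivalent to the stated chain, since $n\geq3$ forces $w_{n}\leq w_{2}\leq w_{1}$.

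For sufficiency, assume (i), (ii), (iii). Then the edges $n\rightarrow1$ (by (iii)), $1\rightarrow i$ for every $i\in\{2,\dots,n-1\}$ (by (i)), and $i\rightarrow n$ for every such $i$ (by (ii)) are all present. Since $n\geq3$ there is at least one middle vertex $i_{0}$, and $1\rightarrow i_{0}\rightarrow n\rightarrow1$ is a cycle; every remaining middle vertex $j$ is attached to this cycle by the edges $1\rightarrow j$ and $j\rightarrow n$. Hence $G$ is strongly connected and $w$ is efficient.

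For necessity, assume $G$ is strongly connected; the idea is that each failed inequality exhibits a nonempty proper vertex set $T$ across which all edges run one way, contradicting strong connectivity. If (i) fails, say $w_{k}>w_{1}$ for some middle $k$, take $T=\{v:w_{v}\geq w_{k}\}$, so that $k\in T$ and $1\notin T$, and check that no edge enters $T$ from its complement: an unscaled edge $u\rightarrow v$ with $w_{u}<w_{k}\leq w_{v}$ cannot exist, and the only scaled candidate, $1\rightarrow n$ with $n\in T$, would require $w_{1}\geq xw_{n}\geq w_{n}\geq w_{k}>w_{1}$, impossible. Symmetrically, if (ii) fails with $w_{k}<w_{n}$, take $T=\{v:w_{v}\leq w_{k}\}$ and show no edge leaves $T$. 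Finally, assuming (i), suppose (iii) fails, $w_{1}>xw_{n}\geq w_{n}$; then $w_{1}$ is the strict global maximum, so $T=\{v:w_{v}=w_{1}\}$ omits $n$, and again no edge enters $T$, the sole scaled candidate $n\rightarrow1$ requiring $xw_{n}\geq w_{1}$, which fails. I expect the main obstacle to be precisely this necessity direction: one must verify that in every cut the only edges that could cross in the forbidden direction are the two $x$-scaled edges between vertices $1$ and $n$, and that the assumed inequality rules each of them out. This is where the factor $x$, together with the asymmetric treatment of vertex $1$ versus the middle vertices, genuinely enters.
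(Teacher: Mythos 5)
Your proof is correct, but note that the paper itself contains no proof of this lemma: it is imported verbatim from \cite{CFF}, so there is no internal argument to compare against. What you have produced is a self-contained derivation from Theorem \ref{blanq}, and it fits naturally into the paper's own toolkit: your edge inventory for $G(S_{n}(x),w)$ is right (only the two edges between vertices $1$ and $n$ carry the factor $x$; all other edges just record the order of the $w_{i}$), and your sufficiency half --- the cycle $1\rightarrow i_{0}\rightarrow n\rightarrow1$ with every other middle vertex attached by $1\rightarrow j\rightarrow n$ --- is exactly the style of argument the paper uses to prove Theorem \ref{t6}, where strong connectivity is also established by exhibiting an explicit cycle. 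The genuinely new content you supply is the necessity direction via cuts: each failed inequality yields a nonempty proper vertex set $T$ with no edges crossing in one direction, and the only delicate point, that neither of the two $x$-scaled edges can cross the cut, is correctly ruled out in each case using $x\geq1$ together with the assumed failure ($1\rightarrow n$ would need $w_{1}\geq xw_{n}$, $n\rightarrow1$ would need $xw_{n}\geq w_{1}$). The logical structure is also sound: strong connectivity gives (i) and (ii) unconditionally, and then (iii) is derived with (i) in hand, so all three inequalities follow. In short, your argument is a valid stand-alone replacement for the citation, obtained by the same graph-theoretic criterion the paper relies on elsewhere.
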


\begin{theorem}
\label{t2}If $A\in\mathcal{PC}_{n}$ is a simple perturbed consistent matrix
then $\mathcal{E}(A)$ is convex.
\end{theorem}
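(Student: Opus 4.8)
The plan is to reduce to the canonical representative $S_n(x)$ and then read off convexity directly from the explicit description already in hand. By definition, a simple perturbed consistent matrix $A\in\mathcal{PC}_n$ is monomially similar to $S_n(x)$ for some $x\geq1$ (the form (\ref{AA}) with $x_1=x$ and $x_2=\cdots=x_{n-1}=1$). By Corollary \ref{cconv}, convexity of $\mathcal{E}(A)$ is equivalent to convexity of $\mathcal{E}(S_n(x))$, so it suffices to treat $A=S_n(x)$.

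Next I would invoke Lemma \ref{tmain}, which characterizes $\mathcal{E}(S_n(x))$ exactly: a positive $n$-vector $w$ is efficient for $S_n(x)$ if and only if
\[
w_n\leq w_i\leq w_1\leq xw_n,\qquad i=2,\ldots,n-1.
\]
The point to emphasize is that every one of these defining constraints is a \emph{homogeneous linear} inequality in the entries of $w$: each of $w_i-w_n\geq0$, $w_1-w_i\geq0$, and $xw_n-w_1\geq0$ cuts out a closed half-space of $\mathbb{R}^n$ through the origin. Consequently $\mathcal{E}(S_n(x))$ is the intersection of finitely many such half-spaces with the open positive orthant $\{w>0\}$, which is itself an intersection of (open) half-spaces.

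Since an intersection of convex sets is convex, it follows immediately that $\mathcal{E}(S_n(x))$ is convex, and hence so is $\mathcal{E}(A)$. There is no substantive obstacle here: the genuine work was done in establishing Lemma \ref{tmain}, and once the efficient set is presented as the solution set of a system of linear inequalities, convexity is automatic. The only points requiring a moment of care are confirming that the reduction via Corollary \ref{cconv} is legitimate (monomial similarity preserves convexity of the efficient set) and observing that the inequalities of Lemma \ref{tmain} are linear rather than merely quasi-linear, so that the region is an honest intersection of half-spaces.
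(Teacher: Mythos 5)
Your proposal is correct and follows exactly the paper's own argument: reduce to the canonical form $S_{n}(x)$ via Corollary \ref{cconv}, then observe that Lemma \ref{tmain} describes $\mathcal{E}(S_{n}(x))$ by finitely many linear inequalities, so it is an intersection of half-spaces with the positive orthant and hence convex. Your write-up merely spells out the half-space observation a bit more explicitly than the paper does.
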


\begin{proof}
Taking into account Corollary \ref{cconv}, we may assume that $A=S_{n}(x)$
with $x\geq1.$ Since a vector $w$ in $\mathcal{E}(A)$ is defined by a finite
set of linear inequalities on its (positive) entries, as described in Lemma
\ref{tmain}, it follows that $\mathcal{E}(A)$ is convex.
\end{proof}

\bigskip

The cone $\mathcal{C}(A)$ may be properly contained in $\mathcal{E}(A),$ even
when $\mathcal{E}(A)$ is convex, as we next illustrate.

\begin{example}
For the simple perturbed consistent matrix $S_{5}(4)\in\mathcal{PC}_{5},$ the
vector $\left[
\begin{array}
[c]{ccccc}%
4 & 2 & 3 & 4 & 1
\end{array}
\right]  ^{T}$ is efficient for $A$ and is not in $\mathcal{C}(A)$, as the
middle entries are distinct.
\end{example}

\section{Empirical efficiency of the singular vector and the right Perron
vector\label{s2}}

For each $n\leq25,$ we generate $n$-by-$n$ matrices $A_{i}\in\mathcal{PC}_{n}%
$, $i=1,\ldots10000$, as follows. We first generate an $n$-by-$n$ matrix
$C_{i}$ with entries from a uniform distribution in $(0.1,15)$ and then
construct the matrix $A_{i}$ as $C_{i}\circ C_{i}^{(-T)},$ in which $\circ$
denotes the Hadamard product and $C_{i}^{(-T)}$ denotes the transpose of the
entrywise inverse of $C_{i}.$ For each $n,$ we count the number of matrices
$A_{i}$ for which the Perron vectors of $A_{i}$ and of $A_{i}A_{i}^{T}$ are
efficient for $A_{i}.$ We also generate a random positive vector for each
$A_{i},$ with entries from a uniform distribution in $(0,5),$ and test it for
efficiency, as a baseline. The results are presented in Figure \ref{fig1}. We
can observe that, for $n$ close to $12$ and beyond, almost always the right
Perron vector and the left singular vector are efficient for $A_{i}.$ Not
surprisingly, the random vector is efficient much less often.
%

\begin{figure}[h]
 \includegraphics[width=\linewidth]{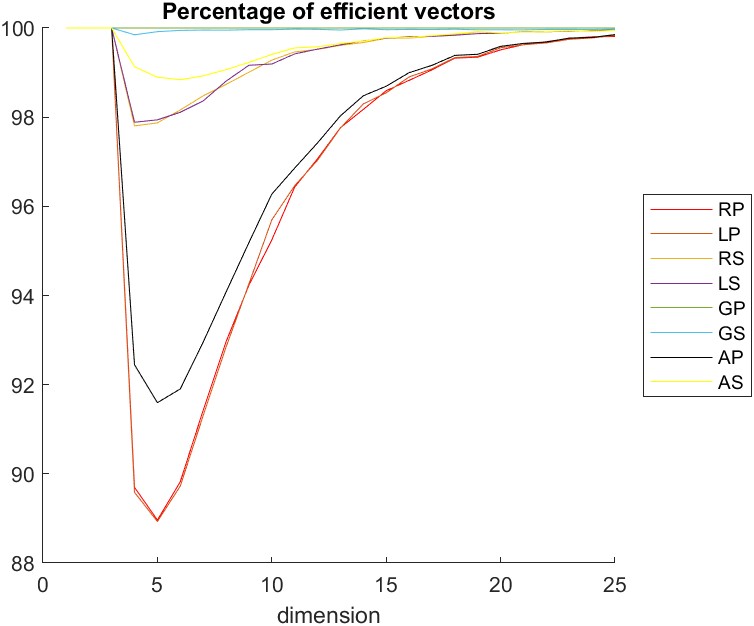}
 \caption{\label{fig1} Number of times the Perron vector, the singular vector and a random
vector are efficient, for $10000$ random reciprocal matrices of size $n,$ for
each $n\leq25.$}

\end{figure}

We then generate $10000$ random matrices $A_{i}\in\mathcal{PC}_{n}$, for each
$n=4,5,7,9,12,15,20,$ following the same procedure as before and obtain for
each matrix the Perron vector $v_{P}$ and the singular vector $v_{S}$. In
Table \ref{tab3} we compare the efficiency of these vectors for each matrix.
We can observe that, when just one of the vectors $v_{P}$ or $v_{S}$ is
efficient, $v_{S}$ is the one that is efficient more often.%

\begin{table}[] \centering
$%
\begin{tabular}
[c]{|c|c|c|c|c|c|c|c|}\hline
$n$ & $4$ & $5$ & $7$ & $9$ & $12$ & $15$ & $20$\\\hline
\multicolumn{1}{|l|}{$v_{P}$ is efficient and $v_{S}$ is inefficient} & $123$
& $137$ & $106$ & $80$ & $42$ & $22$ & $6$\\\hline
\multicolumn{1}{|l|}{$v_{P}$ is inefficient and $v_{S}$ is efficient} & $908$
& $901$ & $592$ & $341$ & $115$ & $41$ & $10$\\\hline
\multicolumn{1}{|l|}{$v_{P}$ and $v_{S}$ are efficient} & $8880$ & $8863$ &
$9251$ & $9548$ & $9841$ & $9935$ & $9983$\\\hline
\multicolumn{1}{|l|}{$v_{P}$ and $v_{S}$ are inefficient} & $89$ & $99$ & $51$
& $31$ & $2$ & $2$ & $1$\\\hline
\end{tabular}
\ \ \ $%
\caption{Comparison of the efficiency of the singular vector  $v_S$ and of the Perron vector $v_P$  for $10000$ random reciprocal matrices of size $n$. The number of matrices in each case is given.}\label{tab3}%
\end{table}%

\section{Conclusions\label{s6}$\allowbreak$}

Motivated by the fact that the normalized outer product of the left and right
singular vectors (associated with the spectral norm) of a matrix $A$ give the
best rank 1 approximation to $A$ (in the Frobenius norm), we developed theory
for the Perron vector of $AA^{T}$ for $A\in\mathcal{PC}_{n}$ and its efficient
vectors. This is compared to the right Perron vector for $A.$ These vectors
are efficient for a reciprocal matrix $A\in\mathcal{PC}_{n}$ that contains a
consistent $(n-1)$-by-$(n-1)$ principal submatrix, due to the shown fact that
any vector in the cone generated by the columns of $A$ is efficient for $A.$
Then, simulations, generating random matrices $A\in\mathcal{PC}_{n}$, test how
often each vector is efficient, as a function of $n.$ Typically, the Perron
vector of $AA^{T}$ performs better than the traditional right Perron vector of
$A,$ though both perform well for large $n.$ But as the singular vector is
nearly as easy to calculate, it should be preferred.

\bigskip

We declare there are no conflicts of interests.

\end{document}